\newtheorem{theorem}{Theorem}[section]
\newtheorem{lemma}[theorem]{Lemma}
\theoremstyle{definition}
\theoremstyle{remark}
\numberwithin{equation}{section}
\def\bfn{{\mathbf n}}
\def\bfx{{\mathbf x}}
\def\bfy{{\mathbf y}}
\def\bfz{{\mathbf z}}
\def\dbC{{\mathbb C}}
\def\dbN{{\mathbb N}}  
\def\dbZ{{\mathbb Z}}\def\dbQ{{\mathbb Q}}
\def\grO{{\mathfrak O}}
\def\tet{{\theta}}
\def\sig{{\sigma}}
\def\eps{\varepsilon}
\def\le{\leqslant} \def\ge{\geqslant}
\begin{document}
\title[Shifted analogues of the divisor function]{A paucity problem associated with a shifted 
integer analogue of the divisor function}
\author[W. Heap]{Winston Heap}
\address{WH: Department of Mathematics, Shandong University, Jinan, Shandong 250100, China}
\email{winstonheap@gmail.com}
\author[A. Sahay]{Anurag Sahay}
\address{AS: Department of Mathematics, University of Rochester, 908 Hylan Building, P.O. 
Box 270138, Rochester, NY 14627, USA}
\email{asahay@ur.rochester.edu}
\author[T. D. Wooley]{Trevor D. Wooley}
\address{TDW: Department of Mathematics, Purdue University, 150 N. University Street, 
West Lafayette, IN 47907-2067, USA}
\email{twooley@purdue.edu}
\subjclass[2010]{11N37, 11D45}
\keywords{Paucity problems, divisor functions, shifted integers.}
\date{}
\dedicatory{}
\begin{abstract}Suppose that $\tet$ is irrational. Then almost all elements 
$\nu\in \dbZ[\tet]$ that may be written as a $k$-fold product of the shifted integers 
$n+\tet$ $(n\in \dbN)$ are thus represented essentially uniquely.
\end{abstract}
\maketitle

\section{Introduction} Given a complex number $\tet$, the shifted integer analogue of the 
natural numbers $\dbN+\tet=\{ n+\tet:n\in \dbN\}$ possesses, at a superficial level, 
many additive properties in common with its unshifted cousin $\dbN$. For multiplicative 
problems, the close connections plausible in the additive setting rapidly evaporate. In this 
note we examine a shifted analogue of the restricted divisor function. Thus, when 
$\nu \in \dbZ[\tet]$, we consider the function
\[
\tau_k(\nu;X,\tet)=\underset{(d_1+\tet)\cdots (d_k+\tet)=\nu}
{\sum_{1\le d_1\le X}\ldots \sum_{1\le d_k\le X}}1.
\]
The mean value
\[
\sum_{\nu \in \dbZ[\tet]}\tau_k(\nu;X,\tet)^2
\]
counts the number of integral solutions of the equation
\begin{equation}\label{1.1}
(x_1+\tet)\cdots (x_k+\tet)=(y_1+\tet)\cdots (y_k+\tet),
\end{equation}
with $1\le x_i,y_i\le X$ $(1\le i\le k)$. We show that when $\tet\not\in \dbQ$, then almost 
all solutions of (\ref{1.1}) are the diagonal ones in which $(x_1,\ldots ,x_k)$ is a 
permutation of $(y_1,\ldots ,y_k)$. Thus, almost all elements $\nu \in \dbZ[\tet]$ that 
may be written as a $k$-fold product of shifted integers $n+\tet$ $(n\in \dbN)$ are 
represented essentially uniquely in this manner.\par

In order to describe our conclusions in more detail, it is convenient to introduce some 
notation. Denote by $T_k(X)$ the number of $k$-tuples $\bfx$ and $\bfy$ in which 
$1\le x_i,y_i\le X$ $(1\le i\le k)$, and $(x_1,\ldots ,x_k)$ is a permutation of 
$(y_1,\ldots ,y_k)$. Thus $T_k(X)=k!X^k+O(X^{k-1})$. We begin with the simplest 
situation in which $\tet \in \dbC$ is either transcendental, or else algebraic of large degree 
over $\dbQ$.

\begin{theorem}\label{theorem1.1} Let $k\in \dbN$ and $\eps>0$. Suppose that 
$\tet\in \dbC$ is either transcendental, or else algebraic of degree $d$ over $\dbQ$, 
where $d\ge k$. Then one has
\[
\sum_{\nu\in \dbZ[\tet]}\tau_k(\nu;X,\tet)^2=T_k(X).
\]
\end{theorem}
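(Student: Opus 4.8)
The plan is to exploit the fact that~(\ref{1.1}) is, after the cancellation of leading terms, a polynomial relation of low degree satisfied by $\tet$, and that under the stated hypotheses $\tet$ is too generic to satisfy such a relation nontrivially. Observe first that $\sum_{\nu\in\dbZ[\tet]}\tau_k(\nu;X,\tet)^2$ is precisely the number of integral $k$-tuples $\bfx,\bfy$ with $1\le x_i,y_i\le X$ satisfying~(\ref{1.1}). Given any such solution, introduce the polynomial
\[
P_{\bfx,\bfy}(t)=\prod_{i=1}^k(x_i+t)-\prod_{i=1}^k(y_i+t)\in\dbZ[t].
\]
Both products are monic of degree $k$ in $t$, so the $t^k$ terms cancel and $\deg P_{\bfx,\bfy}\le k-1$. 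Moreover~(\ref{1.1}) asserts exactly that $P_{\bfx,\bfy}(\tet)=0$.

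Next I would argue that $P_{\bfx,\bfy}$ is the zero polynomial. If $\tet$ is transcendental this is immediate, since a nonzero element of $\dbZ[t]\subseteq\dbQ[t]$ has only algebraic roots. If instead $\tet$ is algebraic of degree $d\ge k$ over $\dbQ$, then its minimal polynomial over $\dbQ$ has degree $d$; were $P_{\bfx,\bfy}$ nonzero it would be a nonzero multiple of this minimal polynomial in $\dbQ[t]$, forcing $\deg P_{\bfx,\bfy}\ge d\ge k$, contrary to $\deg P_{\bfx,\bfy}\le k-1$. In either case $P_{\bfx,\bfy}\equiv 0$, so that
\[
\prod_{i=1}^k(t+x_i)=\prod_{i=1}^k(t+y_i)\quad\text{in }\dbQ[t].
\]

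Finally, since $\dbQ[t]$ is a unique factorisation domain and each factor $t+x_i$ and $t+y_j$ is monic and irreducible, comparison of factorisations shows that $(x_1,\ldots,x_k)$ and $(y_1,\ldots,y_k)$ agree as multisets; that is, $\bfy$ is a permutation of $\bfx$. Conversely, commutativity of multiplication shows that every such permutation pair solves~(\ref{1.1}). Hence the solutions of~(\ref{1.1}) in the box $[1,X]^{2k}$ are exactly the $T_k(X)$ diagonal ones, which yields the stated identity. The argument is essentially elementary, and the only points requiring care are the cancellation forcing $\deg P_{\bfx,\bfy}\le k-1$ and the use of the hypothesis $d\ge k$ to preclude a nontrivial algebraic relation; correspondingly, I would expect no serious obstacle here, the genuine difficulty of the subject residing in the complementary regime where $\tet$ has small degree and this clean dichotomy is unavailable.
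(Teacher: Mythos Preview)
Your proof is correct and follows essentially the same approach as the paper: both show that the polynomial $\prod_i(t+x_i)-\prod_i(t+y_i)$ has degree at most $k-1$ yet vanishes at $\tet$, and then use the hypothesis on $\tet$ to force this polynomial to be identically zero, whence $\bfy$ is a permutation of $\bfx$. The only cosmetic difference is that the paper phrases the key step as the $\dbQ$-linear independence of $1,\tet,\ldots,\tet^{k-1}$ forcing each coefficient $\sig_j(\bfx)-\sig_j(\bfy)$ to vanish, whereas you invoke divisibility by the minimal polynomial; these are equivalent.
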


The situation in which $\tet$ is algebraic of small degree is more complicated.

\begin{theorem}\label{theorem1.2} Let $k\in \dbN$ and $\eps>0$. Suppose that 
$\tet\in \dbC$ is algebraic of degree $d$ over $\dbQ$, where $2\le d<k$. Then one has
\[
\sum_{\nu\in \dbZ[\tet]}\tau_k(\nu;X,\tet)^2=T_k(X)+O(X^{k-d+1+\eps}).
\]
Here, the implicit constant in Landau's notation may depend on $k$, $\eps$ and $\tet$.
\end{theorem}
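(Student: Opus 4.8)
The plan is to recast (\ref{1.1}) as a polynomial divisibility, to peel off the diagonal contribution exactly, and to bound the remaining solutions by combining a divisor estimate with the observation that the divisibility forces an associated quotient polynomial to be small. To each solution of (\ref{1.1}) I attach the monic polynomials $F(t)=\prod_{i=1}^{k}(t+x_i)$ and $G(t)=\prod_{j=1}^{k}(t+y_j)$ in $\dbZ[t]$, so that $F-G$ has degree at most $k-1$. Writing $\bar p\in\dbZ[t]$ for the primitive integral minimal polynomial of $\tet$ (of degree $d$), equation (\ref{1.1}) is equivalent to $F(\tet)=G(\tet)$, hence to $\bar p\mid F-G$ in $\dbQ[t]$; since $\bar p$ is primitive, Gauss's lemma yields $F-G=\bar p\,q$ with $q\in\dbZ[t]$ and $\deg q\le k-1-d$. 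The diagonal solutions are precisely those with $F=G$, equivalently $q=0$, and they contribute exactly $T_k(X)$ to $\sum_{\nu}\tau_k(\nu;X,\tet)^2$. Thus Theorem~\ref{theorem1.2} reduces to the estimate $\calN(X)\ll X^{k-d+1+\eps}$, where $\calN(X)$ counts the solutions of (\ref{1.1}) with $q\neq 0$.

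The first ingredient is a uniform divisor bound: for $1\le j\le k$ and every $\nu\in\dbZ[\tet]$, the number $\tau_j(\nu;X,\tet)$ of tuples $(z_1,\dots,z_j)$ with $1\le z_i\le X$ and $(z_1+\tet)\cdots(z_j+\tet)=\nu$ satisfies $\tau_j(\nu;X,\tet)\ll_{j,\eps}X^{\eps}$. I would prove this by working in the ring of integers $\calO$ of $\dbQ(\tet)$: such a representation yields an ordered factorisation $\nu\calO=(z_1+\tet)\calO\cdots(z_j+\tet)\calO$ into integral ideals, and the number of ordered factorisations of $\nu\calO$ into $j$ ideals is at most the $j$-fold ideal divisor function of $\nu\calO$, hence $\ll_{j,\eps}N(\nu\calO)^{\eps}\ll_{j,\eps}X^{\eps}$ because $N(\nu\calO)=\prod_i\abs{N(z_i+\tet)}\ll X^{jd}$. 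Moreover a fixed integral ideal can equal $(z+\tet)\calO$ for at most $2d$ integers $z$, since $\abs{N(z+\tet)}=\abs{\bar p(-z)}/\abs{a_d}$, with $a_d$ the leading coefficient of $\bar p$, so any such $z$ is a root of one of two fixed polynomials of degree $d$. (Here the hypothesis $d\ge 2$ is used: $\bar p$ is then non-linear, so $\bar p(-z)\neq 0$ for every $z\in\dbZ$.) Multiplying these two bounds gives the claim.

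To bound $\calN(X)$ I would distinguish two cases. If every coordinate $x_i$ of $\bfx$ is a zero of $q(-t)$, then, since $q$ has at most $k-1-d$ distinct roots, $\bfx$ lies in one of $O_k(1)$ tuples; the element $\nu=\prod_i(x_i+\tet)$ is then fixed, so $\bfy$ is one of $O_{k,\eps}(X^{\eps})$ tuples by the divisor bound, and such solutions number $O_{k,\eps}(X^{\eps})$. Otherwise, fix a coordinate $x_1$ of $\bfx$ with $w:=q(-x_1)\neq 0$. Comparing the coefficients of $F-G=\bar p\,q$ from the top down, and using $\abs{e_i(\bfx)-e_i(\bfy)}\ll_k X^{i}$ for the coefficients of $F-G$, one finds by back-substitution that the coefficient of $t^b$ in $q$ is $\ll_k X^{k-d-b}$, whence $\abs{w}\ll_k X^{k-d}$; this is the decisive saving, as the trivial bound here is only $X^{k}$. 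Since $-x_1$ is a root of $F$, we have $G(-x_1)=-\bar p(-x_1)\,w$, a nonzero integer of size $\ll_k X^{k}$, so $\prod_j(y_j-x_1)=G(-x_1)$ is a prescribed nonzero integer; hence the tuple $(y_1-x_1,\dots,y_k-x_1)$, and so $\bfy$, is determined in $O_{k,\eps}(X^{\eps})$ ways by the ordinary $k$-fold divisor bound, after which $\bfx$ --- which has $x_1$ fixed and satisfies $\prod_i(x_i+\tet)=\prod_j(y_j+\tet)$ --- is determined in $O_{k,\eps}(X^{\eps})$ ways by the case $j=k-1$ of the divisor bound. Letting $(x_1,w)$ range over the $\ll X\cdot X^{k-d}=X^{k-d+1}$ admissible pairs then gives $\calN(X)\ll_{k,\eps}X^{k-d+1+\eps}$, as required.

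The two steps I expect to require the most care are the clean implementation of the divisor bound $\tau_j(\nu;X,\tet)\ll X^{\eps}$, in particular the passage to $\calO$ when $\dbZ[\tet]$ is not integrally closed, and the back-substitution yielding $\abs{q(-x_1)}\ll X^{k-d}$, since it is exactly this bound that produces the exponent $k-d+1$ rather than the trivial $k+1$.
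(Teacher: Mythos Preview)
Your overall strategy is essentially the paper's: write $F-G=\bar p\,q$ in $\dbZ[t]$, bound the coefficients of $q$ by back-substitution to obtain $|q(-x_1)|\ll X^{k-d}$, fix one variable together with the value $w=q(-x_1)$ there (giving $O(X^{k-d+1})$ choices), and finish with divisor estimates. Your endgame differs only cosmetically: you recover the remaining $x_i$ via the ideal-theoretic bound $\tau_{k-1}(\nu;X,\tet)\ll X^{\eps}$, whereas the paper takes norms to get $\prod_i m_\tet(-y_i)=\prod_i m_\tet(-x_i)$ and then counts roots of $m_\tet(-t)=n_i$. These are two faces of the same estimate, since your ideal argument is at bottom a norm computation.

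There is, however, a genuine gap in your Case~1. You claim that if every $x_i$ is a zero of $q(-t)$ then, because $q$ has at most $k-1-d$ roots, ``$\bfx$ lies in one of $O_k(1)$ tuples''. But $q$ depends on the solution $(\bfx,\bfy)$; its zero set is not fixed in advance. All you actually learn is that the $x_i$ take at most $k-1-d$ distinct values, and there are $\gg X^{k-1-d}$ such tuples in $[1,X]^k$, not $O_k(1)$. The right observation is that $q(-x_i)=0$ forces $G(-x_i)=-\bar p(-x_i)q(-x_i)=0$, hence $x_i=y_j$ for some $j$ (using $\bar p(-x_i)\ne 0$, which holds since $d\ge 2$). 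Thus Case~1 is vacuous once one cancels common factors $x_i=y_j$ at the outset and inducts on $k$; this is precisely the reduction the paper performs first, after which your Case~2 argument alone suffices. With that preliminary cancellation inserted, your proof goes through and matches the paper's.
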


It follows that when $\tet\not\in \dbQ$, then there is a paucity of non-diagonal solutions 
in the equation (\ref{1.1}). Moreover, one has the asymptotic formula
\[
\sum_{\nu\in \dbZ[\tet]}\tau_k(\nu;X,\tet)^2=k!X^k+O(X^{k-1+\eps}).
\]
These conclusions are in marked contrast with the corresponding situation in which 
$\tet\in \dbQ$. When $\tet$ is rational, experts will recognise that a straightforward 
exercise employing the circle method yields the lower bound
\[
\sum_{\nu\in \dbQ}\tau_k(\nu;X,\tet)^2\gg_{\tet,k} X^k(\log X)^{(k-1)^2}.
\]
Indeed, additional work would exhibit an asymptotic formula in place of this lower bound. In 
this regard, we note that the contour integral methods of \cite{HNR, HL} would also be 
accessible. The inquisitive reader interested in paucity problems for diagonal Diophantine 
systems will find a representative slice of the relevant literature in 
\cite{Sal2007, SW1997, VW1997, Woo2021}.\par

One motivation for considering this problem is that such multiplicative equations arise 
naturally when studying the higher moments of zeta and $L$-functions. In particular, 
equation \eqref{1.1} is intimately related to the  moments and value distribution of the 
Hurwitz zeta function $\zeta(s,\tet)$ with shift parameter $0<\tet \le 1$. For irrational shifts $\tet$, this forms the focus of an ongoing project of the first and second author. For rational shifts $\tet$, see \cite{Sah2021}.\par 

Perhaps it is worth stressing that the equation (\ref{1.1}) corresponds to a system of 
polynomial equations with integral variables. In order to illustrate this point, consider the 
situation in which $k=3$ and $\tet=\sqrt{2}$. We make use of the linear independence of 
$1$ and $\sqrt{2}$ over $\dbQ$. On noting that
\[
(x_1+\sqrt{2})(x_2+\sqrt{2})(x_3+\sqrt{2})=x_1x_2x_3+2(x_1+x_2+x_3)+\sqrt{2}
(x_1x_2+x_2x_3+x_3x_1+2),
\]
we find that the equation (\ref{1.1}) holds if and only if
\begin{equation}\label{1.2}
\begin{aligned}x_1x_2x_3+2(x_1+x_2+x_3)&=y_1y_2y_3+2(y_1+y_2+y_3)\\
x_1x_2+x_2x_3+x_3x_1&=y_1y_2+y_2y_3+y_3y_1.\end{aligned}
\end{equation}
In this scenario, we conclude from Theorem \ref{theorem1.2} that the number $N(X)$ of 
integral solutions of the system (\ref{1.2}) with $1\le x_i,y_i\le X$ $(1\le i\le 3)$ satisfies
\[
N(X)=6X^3+O(X^{2+\eps}).
\]

\par The basic strategy that we employ in the proofs of Theorems \ref{theorem1.1} and 
\ref{theorem1.2} is based on the generation of multiplicative polynomial identities. These 
are inspired by an examination of the polynomial
\[
\prod_{i=1}^k(t-x_i)-\prod_{i=1}^k(t-y_i).
\]
There are parallels here with the third author's treatment of the Vinogradov system in joint 
work with Vaughan \cite{VW1997}. We also interpret the function $\tau_k(\nu;X,\tet)$ as 
the number of integral solutions of the equation
\begin{equation}\label{1.3}
(x_1+\tet)(x_2+\tet)\cdots (x_k+\tet)=\nu ,
\end{equation}
with $1\le x_i\le X$ $(1\le i\le k)$. This can be seen as a restriction of the $k$-fold divisor 
function in the ring of integers $\grO_K$ of the number field $K=\dbQ(\tet)$. Immediate 
appeal to such ideas is limited by the observation that the divisors occurring on the left 
hand side of (\ref{1.3}) come from a thin subset of all the algebraic integers in $\grO_K$. 
Nonetheless, a crude bound $\tau_k(\nu;X,\tet)=O(X^\eps)$ stemming from such ideas 
plays a role in the concluding phase of our proof of Theorem \ref{theorem1.2}.\par

Our basic parameter is $X$, a sufficiently large positive number. Whenever $\eps$ 
appears in a statement, either implicitly or explicitly, we assert that the statement holds 
for each $\eps>0$. In this paper, implicit constants in the notations of Landau and 
Vinogradov may depend on $\eps$, $k$, and $\tet$. We make frequent use of vector 
notation in the form $\bfx=(x_1,\ldots,x_k)$. Here, the dimension $k$ is permitted to 
depend on the course of the argument.\par
 
\noindent {\bf Acknowledgements:} The second author is grateful to his PhD advisor, Steve 
Gonek, for support and encouragement. The third author's work is supported by NSF grants 
DMS-2001549 and DMS-1854398. The second and third authors also benefitted from 
activities hosted by the American Institute of Mathematics, San Jose supported via the 
latter grant. 

\section{The proof of Theorem \ref{theorem1.1}} We begin in this section by considering 
the situation in which $\tet\in \dbC$ is either transcendental, or else is algebraic of degree 
$d\ge k$ over $\dbQ$. In such circumstances, we rewrite the equation (\ref{1.1}) by using 
elementary symmetric polynomials $\sig_j(\bfz)\in \dbZ[z_1,\ldots ,z_k]$. These may be 
defined for $j\ge 0$ by means of the generating function identity
\[
\sum_{j=0}^k\sig_j(\bfz)t^{k-j}=\prod_{i=1}^k(t+z_i).
\]
The equation (\ref{1.1}) may thus be rewritten in the form
\[
\sum_{j=0}^k\sig_j(\bfx)\tet^{k-j}=\sum_{j=0}^k\sig_j(\bfy)\tet^{k-j}.
\]
Since $\sig_0(\bfx)=1=\sig_0(\bfy)$, we find that
\begin{equation}\label{2.1}
\sum_{j=1}^k(\sig_j(\bfx)-\sig_j(\bfy))\tet^{k-j}=0.
\end{equation}
In our present situation with $\tet$ either transcendental, or else algebraic of degree 
$d\ge k$ over $\dbQ$, the complex numbers $1,\tet,\ldots ,\tet^{k-1}$ are linearly 
independent over $\dbQ$. Then it follows from (\ref{2.1}) that $\sig_j(\bfx)=\sig_j(\bfy)$ 
$(1\le j\le k)$. In particular, one obtains the polynomial identity
\begin{equation}\label{2.2}
\prod_{j=1}^k(t-x_j)=\prod_{j=1}^k(t-y_j).
\end{equation}

\par The polynomial relation (\ref{2.2}) implies that left and right hand sides must have the 
same zeros with identical multiplicities. Hence $(x_1,\ldots ,x_k)$ must be a permutation of 
$(y_1,\ldots ,y_k)$. The conclusion
\[
\sum_{\nu\in \dbZ[\tet]}\tau_k(\nu;X,\tet)^2=T_k(X)
\]
is then immediate on considering the Diophantine interpretation (\ref{1.1}) of the mean 
value on the left hand side. This completes the proof of Theorem \ref{theorem1.1}.

\section{The proof of Theorem \ref{theorem1.2}} We now assume that $\tet\in \dbC$ is 
an algebraic number of degree $d$ over $\dbQ$, with $2\le d<k$. In this situation the 
equation (\ref{1.1}) simplifies, since $\tet^d$ may be expressed as a $\dbQ$-linear 
combination of $1,\tet,\ldots ,\tet^{d-1}$. However, the equation (\ref{2.1}) no longer 
delivers $k$ independent polynomial equations, but instead $d$ such equations with $d<k$. 
The strategy of \S2 is thus no longer applicable.\par

Let $\bfx,\bfy$ be an integral solution of the equation (\ref{1.1}) with $1\le x_i,y_i\le X$ 
$(1\le i\le k)$, in which $(x_1,\ldots ,x_k)$ is not a permutation of $(y_1,\ldots ,y_k)$. 
Observe first that if $x_i=y_j$ for any indices $i$ and $j$ with $1\le i,j\le k$, then we may 
cancel the factors $x_i+\tet$ and $y_j+\tet$, respectively, from the left and right hand 
sides of (\ref{1.1}). It thus suffices to establish the conclusion of Theorem 
\ref{theorem1.2} with $k$ replaced by $k-1$. Here, of course, if $d\ge k-1$, then the 
desired conclusion follows from Theorem \ref{theorem1.1}. By repeatedly cancelling pairs 
of equal factors in this way, it is apparent that there is no loss of generality in supposing 
henceforth that $x_i=y_j$ for no indices $i$ and $j$ with $1\le i,j\le k$.\par

Consider the polynomial
\begin{equation}\label{3.1}
F(t)=\prod_{i=1}^k(t+x_i)-\prod_{i=1}^k(t+y_i).
\end{equation}
This polynomial has degree at most $k-1$, and so for suitable integers $a_j=a_j(\bfx,\bfy)$ 
$(0\le j\le k-1)$, we may write
\[
F(t)=a_0+a_1t+\ldots +a_{k-1}t^{k-1}.
\]
Note that for $0\le j\le k-1$, one has
\begin{equation}\label{3.2}
|a_j|=|\sig_{k-j}(\bfx)-\sig_{k-j}(\bfy)|\ll X^{k-j}.
\end{equation}
Next, denote by $m_\tet\in \dbZ[t]$ the minimal polynomial of $\tet$ over $\dbZ$. Then 
$m_\tet$ is irreducible of degree $d$ over $\dbZ$, and if $m_\tet$ has leading coefficient 
$c_d\ne 0$, then $c_d^{-1}m_\tet\in \dbQ[t]$ is the usual minimal polynomial of $\tet$ 
over $\dbQ$. We may write
\[
m_\tet(t)=c_0+c_1t+\ldots +c_dt^d,
\]
in which $|c_j|\ll_\tet 1$ $(0\le j\le d)$. We observe from (\ref{1.1}) and (\ref{3.1}) that
\[
F(\tet)=\prod_{i=1}^k(x_i+\tet)-\prod_{i=1}^k(y_i+\tet)=0,
\]
whence $m_\tet(t)$ divides $F(t)$. Consequently, there is a polynomial 
$\Psi(t)=\Psi_\tet(t;\bfx,\bfy)\in \dbZ[t]$ having the property that
\begin{equation}\label{3.3}
F(t)=m_\tet(t)\Psi(t).
\end{equation}
Since $\deg(\Psi)=\deg(F)-\deg(m_\tet)\le k-1-d$, we may write
\[
\Psi(t)=b_0+b_1t+\ldots +b_{k-1-d}t^{k-1-d},
\]
where $b_m\in \dbZ$ $(0\le m\le k-1-d)$. Our immediate goal is to bound the coefficients 
$b_m$.\par

We claim that for $0\le m\le k-d-1$, one has
\begin{equation}\label{3.4}
|b_m|\ll X^{k-d-m}.
\end{equation}
This we establish by considering the formal Laurent series for $m_\tet(t)^{-1}$. Thus, we 
have $m_\tet(t)^{-1}=e(t)\in \dbQ((1/t))$, where for suitable rational coefficients 
$e_j\in \dbQ$ $(j\ge d)$ one has
\[
e(t)=\sum_{j=d}^\infty e_jt^{-j}=\frac{1}{c_dt^d}(1+c_d^{-1}c_{d-1}t^{-1}+\ldots 
+c_d^{-1}c_0t^{-d})^{-1}.
\]
Note here that $c_d\ne 0$, and further that $e_j\ll_{\tet,j}1$. We may therefore infer from 
(\ref{3.3}) that $\Psi(t)=e(t)F(t)$, whence
\[
\sum_{m=0}^{k-1-d}b_mt^m=\Biggl( \sum_{j=d}^\infty e_jt^{-j}\Biggr) \Biggl( 
\sum_{i=0}^{k-1}a_it^i\Biggr) .
\]
In view of the bound (\ref{3.2}), we deduce that for $0\le m\le k-1-d$ one has
\begin{align*}
b_m&=e_da_{m+d}+e_{d+1}a_{m+d+1}+\ldots +e_{k-1-m}a_{k-1}\\
&\ll X^{k-d-m}+X^{k-d-m-1}+\ldots +X\ll X^{k-d-m}.
\end{align*}
This confirms the bound (\ref{3.4}). We may suppose henceforth that there is a positive 
number $C=C(k,\tet)$ having the property that
\begin{equation}\label{3.5}
|b_m|\le CX^{k-d-m}\quad (0\le m\le k-d-1).
\end{equation}

\par We now arrive at the polynomial identity that does the heavy lifting in the proof of 
Theorem \ref{theorem1.2}.

\begin{lemma}\label{lemma3.1} Suppose that $\bfx,\bfy$ is an integral solution of the 
equation (\ref{1.1}) with $1\le x_i,y_i\le X$ $(1\le i\le k)$, in which $x_i=y_j$ for no indices 
$i$ and $j$ with $1\le i,j\le k$. Then, for each index $j$ with $1\le j\le k$, there is an 
integer $\rho_j$, with $1\le |\rho_j|\le kCX^{k-d}$, having the property that
\[
\prod_{i=1}^k(x_i-y_j)=\rho_jm_\tet(-y_j).
\]
\end{lemma}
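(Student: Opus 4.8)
The plan is to evaluate the factorization \eqref{3.3} at the integer points $t=-y_j$ $(1\le j\le k)$. Substituting $t=-y_j$ into the definition \eqref{3.1} of $F$ annihilates the second product, since it contains the factor $(-y_j)+y_j=0$, and hence $F(-y_j)=\prod_{i=1}^k(x_i-y_j)$. On the other hand, \eqref{3.3} gives $F(-y_j)=m_\tet(-y_j)\Psi(-y_j)$. Since $\Psi\in\dbZ[t]$ and $y_j\in\dbZ$, the quantity $\rho_j=\Psi(-y_j)$ is an integer, and by construction one has the desired identity $\prod_{i=1}^k(x_i-y_j)=\rho_j m_\tet(-y_j)$.

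It then remains to verify the two size constraints on $\rho_j$. For the upper bound I would invoke the coefficient estimate \eqref{3.5} together with $1\le y_j\le X$: writing $\Psi(t)=b_0+b_1t+\dots+b_{k-1-d}t^{k-1-d}$, each of the (at most $k-d$) monomials satisfies $|b_m(-y_j)^m|\le CX^{k-d-m}\cdot X^m=CX^{k-d}$, and summing over $m$ gives $|\rho_j|\le(k-d)CX^{k-d}\le kCX^{k-d}$. For the lower bound I would argue by contradiction: were $\rho_j=0$, the identity just obtained would force $\prod_{i=1}^k(x_i-y_j)=0$, so that $x_i=y_j$ for some index $i$, contrary to the hypothesis of the lemma; hence $\rho_j$ is a nonzero integer, whence $1\le|\rho_j|$.

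I do not anticipate any real obstacle in proving the lemma itself, since it is essentially a bookkeeping consequence of the division \eqref{3.3} already in hand; the only points requiring a little care are the integrality of $\Psi$ (ensuring $\rho_j\in\dbZ$) and the accounting of the constant $C$ and of the number of monomials of $\Psi$ in the estimate above. The substantive work will come afterwards: the $k$ relations $\prod_i(x_i-y_j)=\rho_j m_\tet(-y_j)$, combined with the smallness $|\rho_j|\ll X^{k-d}$ and the crude divisor bound $\tau_k(\nu;X,\tet)=O(X^\eps)$ flagged in the introduction, should be exploited — by fixing the few small parameters $\rho_j$ together with the values $m_\tet(-y_j)$, and then counting divisors of the resulting integers — so as to bound the number of non-diagonal pairs $(\bfx,\bfy)$ by $O(X^{k-d+1+\eps})$.
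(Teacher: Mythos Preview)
Your proposal is correct and follows essentially the same approach as the paper: evaluate the factorization \eqref{3.3} at $t=-y_j$, set $\rho_j=\Psi(-y_j)$, bound $|\rho_j|$ term-by-term via \eqref{3.5}, and use the hypothesis $x_i\ne y_j$ to rule out $\rho_j=0$. The paper's argument is nearly word-for-word the same, differing only in that it phrases the nonvanishing step as ``the left hand side is a non-zero integer, so both factors on the right are non-zero'' rather than as a contradiction.
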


\begin{proof} Recalling (\ref{3.1}) and (\ref{3.3}), we see that
\[
F(-y_j)=\prod_{i=1}^k(x_i-y_j)=m_\tet(-y_j)\Psi(-y_j).
\]
But in view of (\ref{3.5}), one has
\[
|\Psi(-y_j)|\le \sum_{m=0}^{k-1-d}|b_m|y_j^m\le (k-d)CX^{k-d}.
\]
Thus, there is an integer $\rho_j=\Psi(-y_j)$ with $|\rho_j|\le kCX^{k-d}$ for which
\[
\prod_{i=1}^k(x_i-y_j)=m_\tet(-y_j)\rho_j.
\]
Notice here that since the left hand side is a non-zero integer, then so too are both factors 
on the right hand side. The conclusion of the lemma follows.
\end{proof}

We may now complete the proof of Theorem \ref{theorem1.2}. Our previous discussion 
ensures that it is sufficient to count solutions $\bfx,\bfy$ of (\ref{1.1}) with 
$1\le x_i,y_i\le X$ $(1\le i\le k)$, in which $x_i=y_j$ for no indices $i$ and $j$ with 
$1\le i,j\le k$. Given any such solution, an application of Lemma \ref{lemma3.1} with $j=k$ 
shows that, for some integer $\rho_k$ with $1\le |\rho_k|\le kCX^{k-d}$, one has
\begin{equation}\label{3.6}
\prod_{i=1}^k(x_i-y_k)=\rho_km_\tet(-y_k).
\end{equation}
Fix any one of the $O(X)$ possible choices for $y_k$, and likewise any one of the 
$O(X^{k-d})$ possible choices for $\rho_k$. Then we see from (\ref{3.6}) that each of the 
factors $x_i-y_k$ $(1\le i\le k)$ must be a divisor of the non-zero integer 
$N=\rho_km_\tet(-y_k)$. It therefore follows from an elementary estimate for the divisor 
function that there are $O(N^\eps)$ possible choices for $x_i-y_k$ $(1\le i\le k)$. Fix any 
one such choice. Then since $y_k$ has already been fixed, we see that $x_1,\ldots ,x_k$ 
and $y_k$ are now all fixed.\par

At this point we return to the equation (\ref{1.1}). By taking norms from $\dbQ(\tet)$ 
down to $\dbQ$, we see that
\[
\prod_{i=1}^km_\tet(-y_i)=\prod_{i=1}^km_\tet(-x_i).
\]
The right hand side here is already fixed and non-zero. A divisor function estimate 
therefore shows that there are $O(X^\eps)$ possible choices for integers $n_1,\ldots ,n_k$ 
having the property that
\[
m_\tet(-y_i)=n_i\quad (1\le i\le k).
\]
Fixing any one such choice for the $k$-tuple $\bfn$, we find that when $1\le i\le k$, there 
are at most $d$ choices for the integer solution $y_i$ of the polynomial equation 
$m_\tet(-t)=n_i$. Altogether then, the number of possible choices for $\bfx$ and $\bfy$ 
given a fixed choice for $y_k$ and $\rho_k$ is $O((NX)^\eps)$. Thus we conclude that the 
total number of possible choices for $\bfx$ and $\bfy$ is $O(X^{k-d+1+\eps})$, and hence
\[
\sum_{\nu \in \dbZ[\tet]}\tau_k(\nu;X,\tet)^2-T_k(X)\ll X^{k-d+1+\eps}.
\]
This completes the proof of Theorem \ref{theorem1.2}.

\bibliographystyle{amsbracket}
\providecommand{\bysame}{\leavevmode\hbox to3em{\hrulefill}\thinspace}

\end{document}